\documentclass[12pt,leqno]{amsart}
\usepackage{amssymb,url}
\usepackage{rotating}
\usepackage{pdflscape}
\overfullrule 5pt

\textwidth160mm
\oddsidemargin5mm
\evensidemargin5mm

\newcommand{\Z}{{\mathbb{Z}}}

\newcommand{\fI}{{\mathfrak{I}}}
\newcommand{\fS}{{\mathfrak{S}}}

\newcommand{\cD}{{\mathcal{D}}}

\newcommand{\bH}{{\mathcal{H}}}
\newcommand{\cI}{{\mathcal{I}}}

\newcommand{\cR}{{\mathcal{R}}}

\newcommand{\cT}{{\mathfrak{T}}}

\newcommand{\Ind}{{\operatorname{Ind}}}
\newcommand{\prI}{{\operatorname{pr}}}
\renewcommand{\leq}{\leqslant}
\renewcommand{\geq}{\geqslant}

\newtheorem{thm}{Theorem}[section]

\newtheorem{conj}[thm]{Conjecture}
\newtheorem{cor}[thm]{Corollary}
\newtheorem{prop}[thm]{Proposition}

\theoremstyle{definition}
\newtheorem{exmp}[thm]{Example}
\newtheorem{defn}[thm]{Definition}

\theoremstyle{remark}
\newtheorem{rem}[thm]{Remark}

\raggedbottom

\begin{document}

\title{A generalised $\tau$-invariant for the unequal parameter case}

\date{}

\author{Meinolf Geck}
\address{Fachbereich Mathematik, IAZ - Lehrstuhl f\"ur Algebra, 
Universit\"at Stuttgart, Pfaf\-fen\-wald\-ring 57, 70569 Stuttgart, Germany}
\email{meinolf.geck@mathematik.uni-stuttgart.de}

\subjclass[2000]{Primary 20C40, Secondary 20C08, 20F55}

\begin{abstract}
In 1979, Vogan proposed a generalised $\tau$-invariant for characterising
primitive ideals in enveloping algebras. Via a known dictionary this
translates to an invariant of left cells of finite Weyl groups. Although 
it is not a complete invariant, it is extremely useful in describing left 
cells. Here, we propose a general framework for defining such invariants
which also applies to Hecke algebras with unequal parameters.
\end{abstract}

\maketitle

\pagestyle{myheadings}
\markboth{Geck}{A generalised $\tau$-invariant for the unequal parameter case}

\section{Introduction} \label{sec:intro}

Let $W$ be a finite Weyl group. Using the corresponding generic 
Iwahori--Hecke algebra and the ''new'' basis of this algebra introduced 
by Kazhdan and Lusztig \cite{KL}, we obtain partitions of $W$ into left,
right and two-sided cells. Analogous notions originally arose in the 
theory of primitive ideals in enveloping algebras; see Joseph \cite{jos}. 
This is one of the sources for the interest in knowing the cell partitions 
of $W$. Vogan \cite{voga}, \cite{voga1} introduced invariants of left cells
which are computable in terms of certain combinatorially defined operators 
$T_{\alpha\beta}$, $S_{\alpha\beta}$ where $\alpha,\beta$ are adjacent 
simple roots of $W$. In the case where $W$ is the symmetric group $\fS_n$, 
these invariants completely characterise the left cells; see \cite[\S 5]{KL},
\cite[\S 6]{voga}. Although Vogan's invariants are not complete invariants 
in general, they have turned out to be extremely useful in describing left 
cells; see, most notably, the work of Garfinkle \cite{gar1}, \cite{gar2},
\cite{gar3}.

Now, the Kazhdan--Lusztig cell partitions are not only defined and
interesting for finite Weyl groups, but also for affine Weyl groups
and Coxeter groups in general; see, e.g., Lusztig \cite{Lu1}, 
\cite{Lusztig03}. Furthermore, the original theory was extended by 
Lusztig \cite{Lusztig83} to allow the possibility of attaching weights to 
the simple reflections. The original setting then corresponds to the case 
where all weights are equal to $1$; we will refer to this case as the
''equal parameter case''. Using ideas from Lusztig \cite[\S 10]{Lu1}, our 
aim here is to propose analogues of Vogan's invariants which work in 
general, i.e., for arbitrary Coxeter groups and arbitrary (positive) 
weights. 

In Section~\ref{sec1} we briefly recall the basic set-up concerning
Iwahori--Hecke algebras and cells in the sense of Kazhdan and Lusztig. 
As Vogan's orginal definition of the generalised $\tau$-invariant 
relies on the theory of primitive ideals, it only applies to finite Weyl 
groups. In Section~\ref{seceq}, we show how to translate this into the 
setting of Kazhdan and Lusztig. (A similar translation has also been done 
by Shi \cite[4.2]{shi}, who uses a definition slightly different from
Vogan \cite{voga}; our argument seems to be more direct.) Thus, the 
generalised $\tau$-invariant is available for arbitrary Coxeter groups in 
the equal parameter case. In Section~\ref{sec2}, we propose an abstract 
setting for defining such invariants; this essentially relies on the 
concept of ''induction of cells'' \cite{myind}, \cite{myrel} and Lusztig's 
method of ''strings'' \cite[\S 10]{Lu1}. In Theorem~\ref{mythm} we show that 
this gives indeed rise to new invariants of left cells. As a 
by-product of our approach, we obtain new (and less computational) proofs 
of the results concerning the ''star'' operations in \cite[\S 4]{KL} and 
the analogous results for ''strings'' in \cite[\S 10]{Lu1}. We conclude 
by discussing examples and stating open problems.

\section{Weight functions and cells} \label{sec1}

Let $W$ be a Coxeter group with generating set $S$ and corresponding
length function $\ell\colon W\rightarrow \Z_{\geq 0}$. Let $\pi=\{p_s \mid 
s \in S\} \subseteq \Z$ be a set of ``weights'' where $p_s=p_t$ 
whenever $s,t\in S$ are conjugate in $W$. This gives rise to a weight 
function $p\colon W \rightarrow \Z$ in the sense of Lusztig 
\cite{Lusztig03}; for $w \in W$, we have $p_w=p_{s_1}+\ldots +p_{s_k}$ 
where $w= s_1\cdots s_k$ ($s_i \in S$) is a reduced expresssion
for~$w$. The original setup in \cite{KL} corresponds to the case where 
$p_s=1$ for all $s \in S$; this will be called the ''equal parameter 
case''.  We shall assume throughout that $p_s>0$ for all $s\in S$.
(There are standard techniques for reducing the general case to this case; 
see Bonnaf\'e \cite[\S 2]{bo3}.)

Let $\bH=\bH_A(W,S,\{p_s\})$ be the corresponding generic Iwahori--Hecke 
algebra, where $A=\Z[v,v^{-1}]$ is the ring of Laurent polynomials in an 
indeterminate $v$. This algebra is free over $A$ with basis $\{T_w\mid 
w \in W\}$, and the multiplication is given by the rule
\[ T_sT_w=\left\{\begin{array}{cl} T_{sw} & \quad \mbox{if $sw>w$},\\
T_{sw}+(v^{p_s}-v^{-p_s})T_w & \quad \mbox{if $sw<w$},
\end{array}\right.\]
where $s\in S$ and $w\in W$; here, $\leq$ denotes the Bruhat--Chevalley
order on $W$. 

Let $\{C_w'\mid w\in W\}$ be the ''new'' basis of $\bH$ introduced in 
\cite[(1.1.c)]{KL}, \cite[\S 2]{Lusztig83}. (These basis elements are
denoted $c_w$ in \cite{Lusztig03}.) For any $x,y\in W$, we write 
\[ C_x'\,C_y'=\sum_{z\in W} h_{x,y,z} \, C_z' \qquad \mbox{where
$h_{x,y,z} \in A$ for all $x,y,z\in W$}.\]
We have the following more explicit formula for $s\in S$, $y\in W$
(see \cite[\S 6]{Lusztig83}, \cite[Chap.~6]{Lusztig03}):
\[C_s'\,C_y' = \left\{\begin{array}{ll} \displaystyle{(v^{p_s}+
v^{-p_s})\,C_y'} &\quad \mbox{if $sy<y$},\\ 
\displaystyle{C_{sy}'+ \sum_{z\in W\,:\,sz<z<y} 
M_{z,y}^s C_z'} &\quad \mbox{if $sy>y$},
\end{array}\right.\] 
where $C_s'=T_s+v^{-p_s}T_1$ and $M_{z,y}^s=\overline{M}_{z,y}^s \in A$ 
is determined as in \cite[\S 3]{Lusztig83}. 

As in \cite[\S 8]{Lusztig03}, we write $x \leftarrow_{L} y$ if there
exists some $s\in S$ such that $h_{s,y,x}\neq 0$, that is, $C_x'$ occurs
in $C_s'\, C_y'$ (when expressed in the $C'$-basis). The Kazhdan--Lusztig
left pre-order $\leq_{L}$ is the transitive closure of $\leftarrow_{L}$.
The equivalence relation associated with $\leq_{L}$ will be denoted by 
$\sim_{L}$ and the corresponding equivalence classes are called the 
{\em left cells} of $W$.

Similarly, we can define a pre-order $\leq_{R}$ by considering
multiplication by $C_s'$ on the right in the defining relation. The
equivalence relation associated with $\leq_{R}$ will be denoted by
$\sim_{R}$ and the corresponding equivalence classes are called the
{\em right cells} of $W$.  We have
\[ x \leq_{R} y \quad \Leftrightarrow \quad x^{-1} \leq_{L} y^{-1};\]
see \cite[5.6, 8.1]{Lusztig03}. Finally, we define a pre-order $\leq_{LR}$ 
by the condition that $x\leq_{LR} y$ if there exists a sequence $x=x_0,x_1,
\ldots, x_k=y$ such that, for each $i \in \{1,\ldots,k\}$, we have 
$x_{i-1} \leq_{L} x_i$ or $x_{i-1}\leq_{\cR} x_i$. The equivalence
relation associated with $\leq_{LR}$ will be denoted by $\sim_{LR}$ 
and the corresponding equivalence classes are called the {\em two-sided 
cells} of $W$.

\begin{defn} \label{defclosed} A (non-empty) subset $\Gamma$ of $W$ is 
called ''closed with respect to $\leq_L$'' if, for any $x,y\in\Gamma$,
we have $\{z\in W\mid x\leq_L z\leq_L y\}\subseteq \Gamma$. Note that 
any such subset is a union of left cells. A left cell itself is clearly 
closed with respect to $\leq_L$.
\end{defn}

Given a subset $\Gamma\subseteq W$ which is closed with respect to
$\leq_L$, we obtain an $\bH$-module $[\Gamma]_A:={\cI}_{\Gamma}/
\hat{\cI}_{\Gamma}$, where
\begin{align*}
{\cI}_{\Gamma} &:=\langle C_w'\mid w\leq_{L} 
z\mbox{ for some $z \in\Gamma$}\rangle_A,\\
\hat{\cI}_{\Gamma} &:=\langle C_w'\mid w \not\in \Gamma, w\leq_{L} z
\mbox{ for some $z \in\Gamma$}\rangle_A.
\end{align*}
Note that, by the definition of the pre-order relation $\leq_{L}$ (and
the condition that $\Gamma$ is closed with respect to $\leq_L$), these
are left ideals in $\bH$. Now denote by $e_x$ ($x\in \Gamma$) the residue
class of $C_x'$ in $[\Gamma]_A$. Then the elements $\{e_x\mid x\in \Gamma\}$
form an $A$-basis of $[\Gamma]_A$ and the action of $C_w'$ ($w \in W$) is
given by the formula
\[ C_w'.e_x=\sum_{y \in \Gamma} h_{w,x,y}\, e_y.\]
A key tool in this work will be the process of ''induction of cells''.  
Let $I\subseteq S$ and consider the parabolic subgroup $W_I\subseteq W$ 
generated by $I$. Then
\[ X_I:=\{w\in W\mid ws>w \mbox{ for all $s\in I$}\}\]
is the set of distinguished left coset representatives of $W_I$ in $W$.
The map $X_I \times W_I \rightarrow W$, $(x,u) \mapsto xu$, is a bijection
and we have $\ell(xu)=\ell(x)+\ell(u)$ for all $x\in X_I$ and $u\in W_I$; see
\cite[\S 2.1]{gepf}. Thus, given $w\in W$, we can write uniquely $w=xu$
where $x\in X_I$ and $u\in W_I$. In this case, we denote $\prI_I(w):=u$.
Let $\sim_{L,I}$ be the equivalence relation on $W_I$ for which the
equivalence classes are the left cells of $W_I$.

\begin{thm}[\protect{\cite{myind}}] \label{cellind} Let $I\subseteq S$.
If $w,w'\in W$ are such that $w\sim_L w'$, then $\prI_I(w)\sim_{L,I}
\prI_I(w')$. In particular, if $\Gamma$ is a left cell of $W_I$, then
$X_I\Gamma$ is a union of left cells of $W$.
\end{thm}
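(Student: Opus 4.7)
\emph{Strategy.} I would prove the stronger assertion that $\prI_I \colon W \to W_I$ is monotone with respect to the Kazhdan--Lusztig pre-orders: if $w \leftarrow_L w'$ in $W$, then $\prI_I(w) \leq_{L,I} \prI_I(w')$ in $W_I$. Iterating along a chain extends this to $\leq_L$; applying the resulting implication to both $w \leq_L w'$ and $w' \leq_L w$ yields the first statement of the theorem. The ``in particular'' clause is then immediate: for $w \in X_I\Gamma$ and any $w' \sim_L w$, we obtain $\prI_I(w') \sim_{L,I} \prI_I(w) \in \Gamma$, and since $\Gamma$ is a $\sim_{L,I}$-class this forces $\prI_I(w') \in \Gamma$, hence $w' \in X_I\Gamma$.

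\emph{Reformulation as a left-ideal property.} For a subset $\Omega \subseteq W_I$ closed under $\leq_{L,I}$, I define
\[
\mathcal{N}_\Omega := \langle C_w' \mid \prI_I(w) \in \Omega \rangle_A \subseteq \bH.
\]
Monotonicity of $\prI_I$ is equivalent to the assertion that $\mathcal{N}_\Omega$ is a left ideal of $\bH$, i.e., $h_{s,w,z} \neq 0$ forces $\prI_I(z) \leq_{L,I} \prI_I(w)$. Closure of $\mathcal{N}_\Omega$ under the parabolic subalgebra $\langle C_u' \mid u \in W_I\rangle_A$ follows tautologically from the cell structure of $W_I$ itself (multiplying a summand $C_{xu}'$ on the left by $C_t'$ with $t \in I$ only produces $C_{xu'}'$ with $u' \leq_{L,I} u$). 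Hence it suffices to analyse $C_s' \cdot C_{xu}'$ for $s \in S$, $x \in X_I$, $u \in W_I$ and to show that every summand $C_z'$ appearing in the expansion satisfies $\prI_I(z) \leq_{L,I} u$.

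\emph{Case analysis and main obstacle.} Using Deodhar's lemma, the analysis of $C_s' \cdot C_{xu}'$ splits according to whether (a) $sx \in X_I$ with $sx > x$, (b) $sx \in X_I$ with $sx < x$, or (c) $sx = xt$ for some $t \in I$. In cases (a) and (b), the Kazhdan--Lusztig recursion of Section~\ref{sec1} gives a leading term $C_{(sx)u}'$ (with $\prI_I = u$, which is fine) plus corrections $\sum_z M_{z,xu}^s C_z'$ to be handled inductively on $\ell(xu)$. The critical case is (c): here one needs an identity of the form
\[
C_s' \cdot C_{xu}' \equiv \text{(terms coming from $C_t' \cdot C_u'$)} \pmod{\mathcal{N}_{\{u' <_{L,I} u\}}},
\]
effectively reducing the outside multiplication to internal multiplication by $C_t'$ on $C_u'$ inside $W_I$, where the conclusion $\prI_I(z) \leq_{L,I} u$ is built into the definition of $\leq_{L,I}$. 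The main obstacle is the bookkeeping of lower-order corrections --- both the $M_{z,xu}^s$-terms and the discrepancy between $C_{xu}'$ and $T_x \cdot C_u'$ --- which must be shown inductively to already lie in $\mathcal{N}_{\{u' \leq_{L,I} u\}}$. A simultaneous induction on $\ell(xu)$ anchored in the explicit $C'$-recursion of Section~\ref{sec1} is the technical heart of the argument.
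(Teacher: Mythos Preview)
The paper does not prove this theorem; it is quoted from \cite{myind}, and the only trace of the argument in the present paper is the appearance of the relative Kazhdan--Lusztig polynomials $p^*_{xu,yv}$ in Example~\ref{cellind1}. Your overall strategy---prove the stronger monotonicity $w\leftarrow_L w'\Rightarrow \prI_I(w)\leq_{L,I}\prI_I(w')$ and deduce the theorem---is exactly the strategy of \cite{myind}, and your Deodhar trichotomy on $sx$ is the correct starting point.

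Two remarks. First, your ``tautological'' claim about closure under the parabolic subalgebra is not correct as stated: for $t\in I$ and $x\in X_I$, left multiplication $C_t'\cdot C_{xu}'$ has no reason to produce only terms of the form $C_{xu'}'$ with the \emph{same} $x$; the $X_I$-component can change (e.g.\ when $tx\in X_I$ with $tx>x$), and the analysis for $t\in I$ is no easier than for a general $s\in S$---it falls under the same Deodhar trichotomy you describe afterwards. Second, and more substantially, the induction you sketch on $\ell(xu)$ does not close on its own: knowing the claim for all shorter $w'$ does not directly control the correction terms $M^s_{z,xu}C_z'$, because those coefficients are defined through the $P$-polynomial recursion rather than through products $C_s'C_{w'}'$ with shorter $w'$. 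What makes the argument go through in \cite{myind} is precisely the construction you allude to as ``the discrepancy between $C_{xu}'$ and $T_x\cdot C_u'$'': one builds a relative Kazhdan--Lusztig basis of the induced module $\Ind_I^S([\Gamma']_A)$ (bar-invariant, upper-triangular over the $T_x\otimes e_u$), identifies it with the image of the $C_{xu}'$, and then the left-ideal property becomes transparent. So your plan is on the right track, but the ``technical heart'' you flag is not a routine bookkeeping induction---it is the relative $p^*$-construction of Example~\ref{cellind1}, and you should say so explicitly.
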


\begin{exmp} \label{cellind1} Let $\Gamma'$ be a left cell of $W_I$. Then
the subset $\Gamma:=X_I\Gamma'$ of $W$ is closed with respect to $\leq_L$.
(This immediately follows from Theorem~\ref{cellind}.) Let $\bH_I
\subseteq \bH$ be the parabolic subalgebra spanned by all $T_w$ where
$w\in W_I$. Then we obtain the $\bH_I$-module $[\Gamma']_A$, with 
standard basis $\{e_w\mid w\in\Gamma'\}$, and the $\bH$-module 
$[X_I\Gamma']_A$, with standard basis $\{e_{xw}\mid x\in X_I,w\in\Gamma'\}$.
By \cite[3.6]{myrel}, we have an isomorphism of $\bH$-modules
\[ [X_I\Gamma']_A\stackrel{\sim}{\rightarrow} \Ind_I^S([\Gamma']_A), 
\qquad e_{yv} \mapsto \sum_{x \in X_I,w\in \Gamma'} p_{xu,yv}^*\,
\big(T_x \otimes e_u\big),\]
where $p_{xu,yv}^*\in A$ are the {\em relative} Kazhdan--Lusztig
polynomials of \cite[Prop.~3.3]{myind} and, for any $\bH_I$-module $V$, we 
denote by $\Ind_I^S(V):=\bH \otimes_{\bH_I} V$ the {\em induced module}, 
with basis $\{T_x\otimes e_w \mid x\in X_I,w\in \Gamma'\}$ (see, for 
example, \cite[\S 9.1]{gepf}).
\end{exmp}

A first invariant of left cells is given as follows. For any $w\in W$, we 
denote by $\cR(w):=\{s\in S\mid ws<w\}$ the {\em right descent set} 
of~$w$. 

\begin{prop}[See \protect{\cite[2.4]{KL} for the equal parameter case and 
\cite[8.6]{Lusztig03} for the general case}] \label{klright} Let $x,y\in W$. 
If $x\sim_{L} y$, then $\cR(x)=\cR(y)$. Thus, for any $I\subseteq S$, 
the set $\{w\in W\mid \cR(w)=I\}$ is a union of left cells of $W$.
\end{prop}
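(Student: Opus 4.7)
The plan is to prove the stronger assertion that $x \leq_L y$ implies $\cR(y) \subseteq \cR(x)$; the two inclusions furnished by $x \sim_L y$ then yield the desired equality. By transitivity of $\leq_L$, it suffices to treat the basic case $x \leftarrow_L y$, namely when $C_x'$ occurs with nonzero coefficient in $C_s' C_y'$ for some $s \in S$.

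The key device is, for each $t \in S$, the $A$-submodule
\[ J_t := \langle C_w' \mid t \in \cR(w)\rangle_A \subseteq \bH. \]
The heart of the argument is to prove that $J_t$ is a \emph{left} ideal of $\bH$. Granted this, membership of $C_y'$ in $J_t$ (when $t \in \cR(y)$) forces $C_s' C_y' \in J_t$, so every $C_x'$ appearing in its $C'$-expansion satisfies $t \in \cR(x)$, exactly what is needed.

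To establish the left-ideal property, observe first that $h\,C_t' \in J_t$ for every $h\in\bH$: this is a direct consequence of the right-multiplication analogue of the formula displayed in the excerpt, since the expansion of $C_w' C_t'$ (whether $t\in\cR(w)$ or not) is always supported on basis elements $C_v'$ with $t \in \cR(v)$. Now fix $w$ with $t\in\cR(w)$ and any $s \in S$; using $C_w'C_t' = (v^{p_t}+v^{-p_t})\,C_w'$ and associativity,
\[ (v^{p_t}+v^{-p_t})\,C_s' C_w' \;=\; C_s'(C_w'C_t') \;=\; (C_s'C_w')\,C_t' \;\in\; J_t. \]
Writing $C_s'C_w' = \sum_u a_u\,C_u'$ in the $C'$-basis, membership of the displayed element in $J_t$ forces $(v^{p_t}+v^{-p_t})\,a_u = 0$ whenever $t\notin\cR(u)$. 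Since $p_t>0$ and $A = \Z[v,v^{-1}]$ is a domain, $v^{p_t}+v^{-p_t}$ is a non-zero-divisor in $A$, so $a_u=0$ for all such $u$, proving $C_s' C_w' \in J_t$. By $A$-linearity, $J_t$ is a left ideal.

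The main conceptual step is recognising that the left-ideal property of $J_t$ is extracted cleanly from associativity together with the ``scalar eigenvalue'' $v^{p_t}+v^{-p_t}$ of right multiplication by $C_t'$ on $J_t$; this bypasses the awkward coefficient analysis one is tempted to do directly within the second summand $\sum_{z:\,sz<z<y} M_{z,y}^s C_z'$ of $C_s'C_y'$, where the presence of the $M$-coefficients would otherwise entangle the inductive bookkeeping.
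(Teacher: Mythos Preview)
Your proof is correct, and in fact it is essentially the classical argument (close to what is done in \cite[8.6]{Lusztig03}): you identify the $A$-span $J_t$ of those $C_w'$ with $t\in\cR(w)$ as a left ideal by exploiting associativity and the eigenvalue relation $C_w'C_t'=(v^{p_t}+v^{-p_t})C_w'$, and then read off the descent-set inclusion from membership in $J_t$. As a bonus you obtain the sharper monotonicity $x\leq_L y\Rightarrow\cR(y)\subseteq\cR(x)$.

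The paper, however, deliberately takes a different route: it deduces the proposition from Theorem~\ref{cellind} (induction of cells) by specialising to $I=\{s\}$, where $W_I=\{1,s\}$ has the two obvious left cells $\{1\}$, $\{s\}$; then $\prI_I(x)\sim_{L,I}\prI_I(y)$ forces $\prI_I(y)=s$ whenever $\prI_I(x)=s$. This argument only yields the statement for $\sim_L$ (not for $\leq_L$), but its purpose is expository: it exhibits Proposition~\ref{klright} as the simplest instance of the cell-induction machinery that drives the rest of the paper (Corollary~\ref{mycor}, Proposition~\ref{myprop}, etc.). Your approach is more self-contained and slightly stronger; the paper's approach trades that for thematic unity with the parabolic-induction framework being developed.
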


We show how this can be deduced from Theorem~\ref{cellind}. Let
$x,y\in W$ be such that $x\sim_L y$. Let $s\in \cR(x)$ and set $I=\{s\}$. 
Then $\prI_I(x)=s$ and so $s=\prI_I(x)\sim_{L,I} \prI_I(y)\in W_I=\{1,s\}$. 
Since $p_s>0$, the definitions immediately show that $\{1\}$, $\{s\}$ 
are the left cells of $W_I$. Hence, we must have $\prI_I(y)=s$ and so 
$s\in \cR(y)$. Thus, we have $\cR(x)\subseteq \cR(y)$. By symmetry, we 
also have $\cR(y)\subseteq \cR(x)$ and so $\cR(x)=\cR(y)$, as required. 

\begin{defn} \label{rem2} For any $w\in W$, the {\em enhanced right descent 
set} is defined as 
\[\cR^\pi(w):=\cR(w)\cup \{sts \mid s,t\in S, st\neq ts, p_s<p_t 
\mbox{ and } wsts<w\}\]
This provides, at least, a complete invariant for the left cells of 
dihedral groups, as the following example shows.
\end{defn}

\begin{exmp} \label{celli2m}
Let $S=\{s_1,s_2\}$ and assume that $st$ has finite order $m\geq 3$. For 
$k\geq 0$ let $1_k=s_1s_2s_1\ldots$ ($k$ factors) and $2_k= s_2s_1s_2\ldots$ 
($k$ factors). Then the left cells of $W=\langle s_1, s_2 \rangle$ are 
described as follows; see Lusztig \cite[8.7, 8.8]{Lusztig03}: 
\begin{itemize}
\item[(a)] If $m$ is odd and $p_{s_1}=p_{s_2}>0$, then the left cells are 
\[\{1_0\},\quad \{2_1,1_2,2_3,\ldots,1_{m-1}\},\quad 
\{1_1,2_2,1_3,\ldots,2_{m-1}\},\quad \{2_m\}.\]
\item[(b)] If $m$ is even and $p_{s_1}=p_{s_2}>0$, then the left cells are 
\[\{1_0\},\quad \{2_1,1_2,2_3,\ldots,2_{m-1}\},\quad 
\{1_1,2_2,1_3,\ldots,1_{m-1}\},\quad \{2_m\}.\]
\item[(c)] If $m$ is even and $p_{s_2}>p_{s_1}>0$, then the left cells are 
\[\{1_0\},\quad \{2_1,1_2,2_3,\ldots,1_{m-2}\},\quad \{2_{m-1}\},\quad
\{1_1\},\quad \{2_2,1_3,2_4,\ldots,1_{m-1}\},\quad \{2_m\}.\]
\end{itemize}
By inspection of the three cases, we see that two elements $x,y\in W$ lie 
in the same left cell if and only if $\cR^\pi(x)=\cR^\pi(y)$.
\end{exmp}

\begin{cor} \label{mycor} Let $x,y\in W$. If $x\sim_L y$, then
$\cR^\pi(x)=\cR^\pi(y)$.
\end{cor}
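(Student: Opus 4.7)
\medskip

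\noindent\emph{Plan for Corollary~\ref{mycor}.} The plan is to combine Proposition~\ref{klright} for the ordinary descent set with the ``induction of cells'' principle (Theorem~\ref{cellind}) and the explicit dihedral calculation of Example~\ref{celli2m} to handle the ``extra'' reflections of shape $sts$. Assume $x\sim_L y$. Proposition~\ref{klright} already yields $\cR(x)=\cR(y)$, so it suffices to show that for every pair $s,t\in S$ with $st\ne ts$ and $p_s<p_t$ one has $xsts<x$ if and only if $ysts<y$.

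First I would fix such a pair and set $I=\{s,t\}$. Applying Theorem~\ref{cellind} to this $I$ produces $u_x:=\prI_I(x)\sim_{L,I} \prI_I(y)=:u_y$ inside $W_I$. Writing $x=x'u_x$ with $x'\in X_I$ (and analogously $y=y'u_y$), I would then invoke the standard fact that the Bruhat order on $W$, restricted to each coset $x'W_I$, coincides under the bijection $u\mapsto x'u$ with the Bruhat order on $W_I$; since $sts$ and $u_x sts$ both lie in $W_I$ and the decomposition $xsts=x'(u_x sts)$ is still of distinguished type (length-additive, with first factor in $X_I$), this gives the equivalence $xsts<x \Leftrightarrow u_x sts<u_x$, and likewise on the $y$-side. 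The question is thereby reduced to a statement entirely inside the dihedral parabolic $W_I$.

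Next I would observe that $p_s<p_t$ forces $s$ and $t$ to be non-conjugate, in particular non-conjugate inside $W_I$, so that the order $m$ of $st$ cannot be odd. In the case $m<\infty$ we are in setting (c) of Example~\ref{celli2m}, and direct inspection of the six listed left cells shows that two elements of $W_I$ lie in the same left cell if and only if their enhanced right descent sets (computed within $W_I$) agree. Hence $u_x\sim_{L,I} u_y$ forces $u_x sts<u_x \Leftrightarrow u_y sts<u_y$, which combined with the previous paragraph closes the argument. The main obstacle I foresee is the case in which $W_I$ is infinite dihedral, since Example~\ref{celli2m} explicitly assumes $m<\infty$: one must either invoke or record the analogous complete description of left cells for the infinite dihedral group with $p_s<p_t$ before the above reduction goes through verbatim. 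The coset-compatibility of the Bruhat order used in the previous paragraph, while not stated in the excerpt, is standard and should be recorded as a small preliminary lemma rather than treated as a serious difficulty.
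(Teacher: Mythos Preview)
Your proposal is correct and follows essentially the same route as the paper: invoke Proposition~\ref{klright} for $\cR(x)=\cR(y)$, then for each pair $s,t$ with $p_s<p_t$ set $I=\{s,t\}$, apply Theorem~\ref{cellind}, and use the dihedral classification of Example~\ref{celli2m} to conclude $xsts<x\Leftrightarrow ysts<y$. You are merely more explicit about the Bruhat-order reduction to $W_I$ and the parity of $m$; your flagged concern about the infinite-dihedral case is a point the paper's own proof likewise passes over in silence.
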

 
\begin{proof} Assume that $x\sim_L y$. By Proposition~\ref{klright}, we 
have $\cR(x)=\cR(y)$. Let $s,t\in S$ be such that $st\neq ts$ and 
$p_s<p_t$. Let $I=\{s,t\}$ and consider the parabolic subgroup $W_I=
\langle s,t\rangle$. By Theorem~\ref{cellind}, we have $\prI_I(x) 
\sim_{L,I} \prI_I(y)$. As observed in Example~\ref{celli2m}, we have  
$xsts<x$ if and only if $ysts<y$. Consequently, we obtain
$\cR^\pi(x)=\cR^\pi(y)$. 
\end{proof}

\section{The equal parameter case} \label{seceq}

We keep the general setting of the previous section. We shall also assume 
that $\bH$ is {\em bounded} in the sense of \cite[13.2]{Lusztig03}. This is 
obviously true for all finite Coxeter groups. It also holds, for example, 
for affine Weyl groups; see the remarks following \cite[13.4]{Lusztig03}. 

\begin{defn}[Vogan \protect{\cite[3.10, 3.12]{voga}}] \label{defvog} 
For any $s,t\in S$ such that $st\neq ts$, we set 
\[ \cD_R(s,t):=\{w\in W\mid \cR(w)\cap \{s,t\} \mbox{ has exactly one
element}\}\]
and, for any $w\in \cD_R(s,t)$, we set $\cT_{s,t}(w):=\{ws,wt\}\cap 
\cD_R(s,t)$. Note that $\cT_{s,t}(w)$ consists of one or two elements; in
order to have a uniform notation, we consider $\cT_{s,t}(w)$ as a multiset 
with two identical elements if $\{ws,wt\}\cap \cD_R(s,t)$ consists of only 
one element. 

Now let $n\geq 0$ and $y,w\in W$. We define a relation $y \approx_n w$ 
inductively as follows. First, let $n=0$. Then $y\approx_0 w$ if $\cR(y)=
\cR(w)$. Now let $n>0$ and assume that $\approx_{n-1}$ has been already 
defined. Then $y\approx_n w$ if $y\approx_{n-1} w$ and if, for any $s,t
\in S$ such that $y,w\in\cD_R(s,t)$ (where $st$ has order $3$ or $4$), 
the following holds. If $\cT_{s,t}(y)=\{y_1,y_2\}$ and $\cT_{s,t}(w)
=\{w_1,w_2\}$, then either $y_1\approx_{n-1} w_1$, $y_2\approx_{n-1}w_2$ or 
$y_1\approx_{n-1} w_2$, $y_2\approx_{n-1}w_1$.

If $y\approx_n w$ for all $n\geq 0$, then $y,w$ are said to have the 
same {\em generalized $\tau$-invariant}. 
\end{defn}

\begin{rem} \label{remstrings} Let $s,t\in S$ be such that $st$ has finite 
order $m\geq 3$. Let $I=\{s,t\}$. Then the parabolic subgroup $W_I$ is 
a dihedral group of order $2m$. For any $w\in W$, the coset $wW_I$ can be 
partitioned into four subsets: one consists of the unique element $x$ of 
minimal length, one consists of the unique element of maximal length, one 
consists of the $(m-1)$ elements $xs,xst,xsts,\ldots$ and one consists of 
the $(m-1)$ elements $xt,xts, xtst,\ldots$. Following Lusztig 
\cite[10.2]{Lu1}, the last two subsets (ordered as above) are called 
{\em strings}. (Note that Lusztig considers the coset $W_Iw$ but, by 
taking inverses, the two versions are clearly equivalent.) Thus, if $w\in 
\cD_R(s,t)$, then $w$ belongs to a unique string which we denote by
$\lambda_w$. Then we certainly have
\[\cT_{s,t}(w)\subseteq\lambda_w\subseteq \cD_R(s,t) \qquad\mbox{for 
all $w\in \cD_R(s,t)$}.\]
As in \cite[10.6]{Lu1}, we set 
\[ \Gamma^*:=\bigl(\bigcup_{w\in \Gamma} \lambda_w\bigr)\setminus \Gamma 
\qquad \mbox{for any subset $\Gamma\subseteq \cD_R(s,t)$}.\]
Now assume that we are in the equal parameter case and that $\Gamma$
is a left cell of $W$ such that $\Gamma\subseteq \cD_R(s,t)$. Then
the following two results are known to hold.
\begin{itemize}
\item[(a)] If $m=3$, then $\Gamma^*$ also is a left cell; see 
Kazhdan--Lusztig \cite[Cor.~4.3]{KL}. (In this case, we have 
$\Gamma^*=\{w^* \mid w\in \Gamma\}$ where $w^*$ is the unique element
of $\cT_{s,t}(w)$.)
\item[(b)] If $m>3$, then $\Gamma^*$ is a union of at most $(m-2)$ 
left cells of $W$; see \cite[Prop.~10.7]{Lu1}. 
\end{itemize}
(For the proof of (b), it is assumed in [{\em loc.\ cit.}] that $W$ is 
crystallographic in order to guarantee certain positivity properties, but 
this assumption is now superfluous thanks to Elias--Williamson \cite{EW}.) 
\end{rem}

With these preparations, we can now state the following result which was 
originally formulated and proved by Vogan in the language of primitive 
ideals in enveloping algebras. 

\begin{prop}[Kazhdan--Lusztig \protect{\cite[\S 4]{KL}}, Lusztig 
\protect{\cite[\S 10]{Lu1}}, Vogan \protect{\cite[\S 3]{voga}}] 
\label{vogan1} Assume that we are in the equal parameter case. Let 
$\Gamma$ be a left cell of $W$. Then all elements in $\Gamma$ have 
the same generalised $\tau$-invariant.
\end{prop}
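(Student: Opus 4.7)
The plan is to prove the stronger assertion $x \sim_L y \Rightarrow x \approx_n y$ by induction on $n$. The base case $n = 0$ is exactly Proposition~\ref{klright}, which states that left-equivalent elements share the same right descent set.

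For the inductive step, assume the implication holds at level $n - 1$ and take $x, y$ in a common left cell $\Gamma$. By induction $x \approx_{n-1} y$, so the remaining content of Definition~\ref{defvog} is the matching condition: for each pair $s, t \in S$ with $st$ of order $m \in \{3, 4\}$ and $x, y \in \cD_R(s,t)$, one must produce a bijection between the multisets $\cT_{s,t}(x)$ and $\cT_{s,t}(y)$ whose matched pairs are related by $\approx_{n-1}$. Note that by Proposition~\ref{klright}, $\Gamma \subseteq \cD_R(s,t)$ as soon as one element of $\Gamma$ lies in $\cD_R(s,t)$.

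When $m = 3$ the argument is immediate from Remark~\ref{remstrings}(a): the star operation identifies $\Gamma$ with another left cell $\Gamma^*$, and $\cT_{s,t}(w) = \{w^*, w^*\}$ for every $w \in \Gamma$, so the inductive hypothesis applied to $x^* \sim_L y^*$ finishes the case.

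The case $m = 4$ is the main obstacle. Remark~\ref{remstrings}(b) only guarantees that $\Gamma^*$ is a union of at most two left cells, and the multiset $\cT_{s,t}(w)$ has either one or two elements depending on whether $w$ is an endpoint or the middle of its string $\lambda_w$. My plan is a case analysis on the positions of $x$ and $y$ within their strings; the subtle case is the mixed one, where, say, $x$ is an endpoint of $\lambda_x$ and $y$ is the middle of $\lambda_y$, since the matching then forces the two endpoints of $\lambda_y$ to be $\approx_{n-1}$-equivalent. This last symmetry would follow from the inductive hypothesis if one could show that the two endpoints of any string lie in a common left cell of $W$; to establish such a statement I would combine Lusztig's string analysis in \cite[\S 10]{Lu1} (now unconditional thanks to \cite{EW}) with Theorem~\ref{cellind} applied to the parabolic $W_{\{s,t\}}$ and the explicit dihedral cell description in Example~\ref{celli2m}(b), which together control how the elements of $\cT_{s,t}(w)$ are distributed among the at most two left-cell components of $\Gamma^*$. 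Once this distribution is pinned down, the inductive hypothesis applied within each component supplies the required pairing.
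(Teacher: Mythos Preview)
Your induction framework, the base case, and the $m=3$ step are all correct and match the paper. The gap is in the $m=4$ analysis: you have misidentified which sub-case is hard.

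The mixed case (one element at an endpoint of its string, the other in the middle) is in fact the easy one. Normalising so that every $z\in\Gamma$ has $zs<z$, $zt>z$, one checks from the formula $(\dagger)$ in the paper that \emph{every} element of $\cT_{s,t}(z)$, for every $z\in\Gamma$, has $t$ in its right descent set and not $s$. If some $z\in\Gamma$ sits at an endpoint of $\lambda_z$, then $\Gamma^*$ visibly contains elements with both possible $\{s,t\}$-descent patterns, so by Remark~\ref{remstrings}(b) it is exactly two left cells $\Gamma_1,\Gamma_2$, distinguished by that pattern; hence all of $\cT_{s,t}(x)\cup\cT_{s,t}(y)$ lies in the single left cell $\Gamma_2$ and any pairing works. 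You do not need to prove that ``the two endpoints of any string lie in a common left cell'' as a general fact, and Theorem~\ref{cellind} together with Example~\ref{celli2m}(b) would not give it anyway: those results only control $\prI_I$, and here $\prI_I$ takes the same value on all the relevant elements.

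The genuinely subtle case is the one you pass over: both $x$ and $y$ (indeed every element of $\Gamma$) in the middle of its string. Then $\cT_{s,t}(z)=\{z_1,z_3\}$ with $z_1,z_3$ the two endpoints of $\lambda_z$, and all elements of $\Gamma^*$ share the same $\{s,t\}$-descent pattern, so descent sets cannot separate the at most two left-cell components of $\Gamma^*$. Your tools give no information about \emph{which} component each $z_i$ lands in, so you cannot produce the required matching between $\{x_1,x_3\}$ and $\{y_1,y_3\}$. The paper closes this gap with a different input: Lusztig's string identities \cite[10.4.2]{Lu1} ($a_{11}=a_{33}$, $a_{13}=a_{31}$, $a_{22}=a_{11}+a_{13}$) show that $y_2\leftarrow_L w_2$ forces $(y_1\leftarrow_L w_1,\,y_3\leftarrow_L w_3)$ or $(y_1\leftarrow_L w_3,\,y_3\leftarrow_L w_1)$; chaining these along a path in $\Gamma$ and then upgrading $\leq_L$ to $\sim_L$ via \cite[Cor.~6.3]{Lu1} (using that all string elements lie in one right cell, hence one two-sided cell) yields the pairing. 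Without this step---which really uses positivity, now available through \cite{EW}---the argument does not go through.
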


\begin{proof} If $W$ is a finite Weyl group, this follows from the results 
in \cite[\S 3]{voga}, using the known dictionary (see, e.g., Barbasch--Vogan
\cite[\S 2]{BV2}) between cells as defined in Section~\ref{sec1} and the 
corresponding notions in the theory of primitive ideals. In the general 
case, one cannot appeal to the theory of primitive ideals or other 
geometric arguments. Instead we argue as follows, using results 
from \cite[\S 4]{KL} and \cite[\S 10]{Lu1}.

We will prove by induction on $n$ that, if $y,w\in W$ are 
such that $y\sim_L w$, then $y\approx_n w$. For $n=0$, this holds by
Propositon~\ref{klright}. Now let $n>0$. By induction, we already know 
that $y\approx_{n-1} w$. Then it remains to consider $s,t \in S$ such 
that $st\neq ts$ and $y,w\in\cD_R(s,t)$.  If $st$ has order $3$, then 
Remark~\ref{remstrings}(a) shows that $\cT_{s,t}(y)=\{y^*,y^*\}$ and
$\cT_{s,t}(w)=\{w^*,w^*\}$; furthermore, $y^*\sim_L w^*$ and so
$y^*\approx_{n-1} w^*$, by induction. Now assume that $st$ has order~$4$. 
In this case, the argument is more complicated (as it is also in the 
setting of \cite[\S 3]{voga}.) Let $I=\{s,t\}$ and $\Gamma$ be 
the left cell containing $y,w$. Since all elements in $\Gamma$ have the
same right descent set, we can choose the notation such that $xs<x$ and 
$xt>x$ for all $x\in \Gamma$. Then, for $x\in \Gamma$, we have $x=x's$, 
$x=x'ts$ or $x=x'sts$ where $x'\in X_I$. This yields that 
\begin{equation*}
\cT_{s,t}(x)=\left\{\begin{array}{cl} 
\{x'st,x'st\}  & \qquad \mbox{if $x=x's$},\\
\{x't,x'tst\}  & \qquad \mbox{if $x=x'ts$},\\
\{x'st,x'st\}  & \qquad \mbox{if $x=x'sts$}.\end{array}\right.
\tag{$\dagger$}
\end{equation*}
Now we distinguish two cases.

{\em Case 1}. Assume that there exists some $x\in \Gamma$ such that $x=x's$
or $x=x'sts$. Then $\lambda_x=(x's,x'st,x'sts)$ and so $\Gamma^*$ contains 
elements with different right descent sets. Hence, by 
Remark~\ref{remstrings}(b), $\Gamma^*$ is the union of two distinct left 
cells $\Gamma_1$ and $\Gamma_2$, where we choose the notation such that:
\begin{itemize}
\item all elements in $\Gamma_1$ have $s$ in their right descent set, but
not $t$;
\item all elements in $\Gamma_2$ have $t$ in their right descent set, but
not $s$.
\end{itemize}
Now consider $y,w \in\Gamma$; we write $\cT_{s,t}(y)=\{y_1, y_2\}\subseteq 
\Gamma^*$ and $\cT_{s,t}(w)=\{w_1,w_2\}\subseteq \Gamma^*$. By 
($\dagger$), all the elements $y_1,y_1,w_1,w_2$ belong to $\Gamma_2$. In 
particular, $y_1\sim_L w_1$, $y_2\sim_L w_2$ and so, by induction, 
$y_1\approx_{n-1} w_1$, $y_2\approx_{n-1} w_2$. 

{\em Case 2}. We are not in Case~1, that is, all elements $x\in\Gamma$ 
have the form $x=x'ts$ where $x'\in X_I$. Then $\lambda_x=(x't,x'ts,
x'tst)$ for each $x\in\Gamma$. Let us label the elements in such a string 
as $x_1,x_2,x_3$. Then $x=x_2$ and $\cT_{s,t}(x)=\{x't,x'tst\}=\{x_1,x_3\}$. 

Now consider $y,w\in\Gamma$. By definition, there is a chain of elements
which connect $y$ to $w$ via the elementary relations $\leftarrow_L$, and 
vice versa. Assume first that $y,w$ are directly connected as 
$y\leftarrow_L w$. Using the labelling $y=y_2$, $w=w_2$ and the notation 
of \cite[10.4]{Lu1}, this means that $a_{22}\neq 0$. Hence, the identities 
''$a_{11}=a_{33}$'', ''$a_{13}=a_{31}$'', ''$a_{22}=a_{11}+a_{13}$'' in
\cite[10.4.2]{Lu1} imply that 
\[(y_1\leftarrow_L w_1 \mbox{ and } y_3 \leftarrow_L w_3) \quad 
\mbox{or} \quad (y_1\leftarrow_L w_3\mbox{ and } y_3\leftarrow_L w_1).\]
(See also \cite[Prop.~4.6]{shi}.) We shall write this as $\cT_{s,t}(y) 
\leftarrow_L \cT_{s,t}(w)$. Now, in general, there is a sequence of 
elements $y=y^{(0)},y^{(1)},\ldots, y^{(k)}=w$ in $\Gamma$ such that 
$y^{(i-1)} \leftarrow_L y^{(i)}$ for $1\leq i\leq k$. At each step, we have
$\cT_{s,t} (y^{(i-1)})\leftarrow_L \cT_{s,t}(y^{(i)})$ by the previous 
argument. Combining these steps, we conclude that either $y_1\leq_L w_1$, 
$y_3\leq_L w_3$ or $y_1\leq_L w_3$, $y_3\leq_L w_1$. Now, all elements in 
a string belong to the same right cell (see \cite[10.5]{Lu1}); in 
particular, all the elements 
$y_i,w_j$ belong to the same two-sided cell. Hence, \cite[Cor.~6.3]{Lu1} 
implies that either $y_1\sim_L w_1$, $y_3\sim_L w_3$ or $y_1\sim_L w_3$,
$y_3\sim_L w_1$. (Once again, the assumption in [{\em loc.\ cit.}] that
$W$ is crystallographic is now superfluous thanks to \cite{EW}.) 
Consequently, by induction, we have either $y_1\approx_{n-1} w_1$, 
$y_3\approx_{n-1}w_3$ or $y_1\approx_{n-1} w_3$, $y_3\approx_{n-1}w_1$.
\end{proof}

One of the most striking results about this invariant has been obtained
by Garfinkle \cite[Theorem~3.5.9]{gar3}: two elements of a Weyl group of 
type $B_n$ belong to the same left cell (equal parameter case) if and 
only if the elements have the same generalised $\tau$-invariant. This
fails in general; a counter-example is given by $W$ of type $D_n$ for
$n\geq 6$ (as mentioned in the introduction of \cite{gar1}).

\begin{rem} \label{rem1} Note that, if $st$ has order $m=4$, then
the set $\cT_{s,t}(w)$ may contain two distinct elements. In order to
obtain a single-valued operator, Vogan \cite[\S 4]{voga1} (for the case 
$m=4$) and Lusztig \cite[\S 10]{Lu1} (for any $m\geq 4$) propose an 
alternative construction, as follows.

Let $s,t\in S$ be such that $st$ has finite order $m\geq 3$. As in 
\cite[10.6]{Lu1}, we define an involution 
\[ \cD_R(s,t)\rightarrow \cD_R(s,t),\qquad w\mapsto \tilde{w},\]
as follows. Let $w\in\cD_R(s,t)$. Then $w$ is contained in a unique
string $\lambda_{w}$ with respect to $s,t$; see Remark~\ref{remstrings}. 
Let $i\in\{1,\ldots, m-1\}$ be the index such that $w$ is the $i$th element 
of $\lambda_{w}$. Then $\tilde{w}$ is defined to be the $(m-i)$th element 
of $\lambda_{w}$. Now let $\Gamma\subseteq \cD_R(s,t)$ be a left cell. Then 
$\tilde{\Gamma}= \{\tilde{w} \mid w\in \Gamma\}$ also is a left cell by 
\cite[Prop.~10.7]{Lu1}. (Again, it is assumed in [{\em loc.\ cit.}] that 
$W$ is crystallographic, but this is now superfluous thanks to 
\cite{EW}.) 

Hence, setting $\tilde{\cT}_{s,t}(w):=\{\tilde{w}\}$ for any $w\in
\cD_R(s,t)$, we obtain a new ''generalised $\tilde{\tau}$-invariant'' by 
exactly the same procedure as in Definition~\ref{defvog}, using 
$\tilde{\cT}_{s,t}$ instead of $\cT_{s,t}$ and allowing any $s,t\in S$ 
such that $st$ has finite order at least~$3$. 

The above procedure is the model for the more general construction of 
invariants below. As we shall see in Example~\ref{explu1}, this even 
provides a new proof---which does not rely on \cite{EW}---for the fact 
that the map $w\mapsto\tilde{w}$ preserves left cells.
\end{rem}

\section{An abstract setting for generalised $\tau$-invariants} \label{sec2}

We keep the general setting of Section~\ref{sec1}, where $\pi=\{p_s\mid 
s\in S\}$ are positive weights for $W$. 

\begin{defn} \label{mydef1} A pair $(I,\delta)$ consisting of a subset
$I\subseteq S$ and a map $\delta \colon W_I\rightarrow W_I$ is called
{\it admissible} if the following conditions are satisfied for every
left cell $\Gamma'\subseteq W_I$ (with respect to the weights $\{p_s\mid 
s\in I\}$):
\begin{itemize}
\item[(1)] The restriction of $\delta$ to $\Gamma'$ is injective and 
$\delta(\Gamma')$ also is a left cell.
\item[(2)] The map $\delta$ induces an $\bH_I$-module isomorphism 
$[\Gamma']_A\cong [\delta(\Gamma')]_A$.
\end{itemize}
We say that $(I,\delta)$ is {\it strongly admissible} if, in addition 
to (1) and (2), the following condition is satisfied:
\begin{itemize}
\item[(3)] We have $u\sim_{R,I}\delta(u)$ for all $u\in W_I$. 
\end{itemize}
The map $\delta$ has a canonical extension to a map $\tilde{\delta}\colon 
W\rightarrow W$: Given $w\in W$, we write $w=xu$ where $x\in X_I$ and 
$u\in W_I$; then we set $\tilde{\delta}(w):= x\delta(u)$. 
\end{defn}

The situation considered by Kazhdan--Lusztig \cite[\S 4]{KL} fits into
this setting as follows.

\begin{exmp} \label{expkalu1} Let $I=\{s,t\}$ with $s\neq t$ and $st$ of 
order~$3$; then $W_I$ is isomorphic to the symmetric group $\fS_3$. The 
left cells of $W_I$ are easily determined; they are
\[ \Gamma_1':=\{1\}, \qquad \Gamma_s':=\{s,ts\}, \qquad \Gamma_t':=\{t,st\}, 
\qquad \Gamma_0':=\{sts\}.\]
The matrix representation of $\bH_I$ afforded by $[\Gamma_s']_A$ with 
respect to the basis $\{e_{s}, e_{ts}\}$ is given by (where we set
$p:=p_s=p_t>0$):
\[ C_{s}' \mapsto \left[\begin{array}{cc} v^p+v^{-p} & 1 \\ 0 & 0
\end{array}\right],\qquad C_{t}' \mapsto \left[\begin{array}{cc} 0 & 0
\\ 1 & v^p+v^{-p} \end{array}\right],\]
and we obtain exactly the same matrices when we consider the matrix
representation afforded by $[\Gamma_t']_A$ with respect to the basis
$\{e_{st},e_{t}\}$. (See \cite[7.2, 7.3, 8.7]{Lusztig03} where dihedral 
groups in general are considered.) Thus, the conditions
(1), (2), (3) in Definition~\ref{mydef1} hold for $(I,\delta)$, if we 
define $\delta \colon W_I \rightarrow W_I$ as follows:
\[\delta\colon \quad \begin{array}{cccccc} 
1 & s & t & st & ts & sts \\ \downarrow & \downarrow & \downarrow & 
\downarrow & \downarrow & \downarrow \\  1 & st & ts & s & t & sts
\end{array}\]
We notice that, if $w\in W$ is such that $w\in \cD_R(s,t)$
(see Definition~\ref{defvog}), then $\{\tilde{\delta}(w)\}=\{ws,wt\} 
\cap \cD_R(s,t)$, hence $\tilde{\delta}(w)=w^*$ with the notation of 
\cite[\S 4]{KL}.
\end{exmp}

\begin{prop} \label{myprop} Let $(I,\delta)$ be an admissible pair. Then 
the following hold.
\begin{itemize}
\item[(a)] If $\Gamma$ is a left cell of $W$, then so is 
$\tilde{\delta}(\Gamma)$ (where $\tilde{\delta}$ is the canonical 
extension of $\delta$ to $W$) and $\tilde{\delta}$ induces an $\bH$-module 
isomorphism $[\Gamma]_A\cong [\tilde{\delta} (\Gamma)]_A$.
\item[(b)] If $(I,\delta)$ is strongly admissible, then we 
have $w\sim_R \tilde{\delta}(w)$ for all $w\in W$.
\end{itemize}
\end{prop}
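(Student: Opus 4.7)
For (a), the plan is to reduce to the $\bH_I$-level via Theorem~\ref{cellind} and Example~\ref{cellind1}. Given a left cell $\Gamma$ of $W$, Theorem~\ref{cellind} ensures that $\prI_I$ takes values in a single left cell of $W_I$ on $\Gamma$, so $\Gamma\subseteq X_I\Gamma_0'$ for a unique left cell $\Gamma_0'$ of $W_I$. Condition~(2) supplies an $\bH_I$-module isomorphism $\phi\colon [\Gamma_0']_A\stackrel{\sim}{\to}[\delta(\Gamma_0')]_A$, $e_u\mapsto e_{\delta(u)}$, and inducing yields an $\bH$-module isomorphism $\Ind_I^S(\phi)\colon T_x\otimes e_u\mapsto T_x\otimes e_{\delta(u)}$. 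Composing with the two instances of Example~\ref{cellind1} produces an $\bH$-module isomorphism
\[\Psi\colon [X_I\Gamma_0']_A\stackrel{\sim}{\longrightarrow} [X_I\delta(\Gamma_0')]_A.\]

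Next, I would use the triangularity of the relative Kazhdan--Lusztig polynomials (namely $p^*_{yv,yv}=1$ with a strictly lower-order tail) to unwind $\Psi$ explicitly on the preferred basis. The outcome is a formula of the shape
\[\Psi(e_{yv})=e_{y\delta(v)}+(\mbox{terms }e_z\mbox{ strictly lower in the filtration}),\]
so $\Psi$ is unitriangular with respect to the bijection $\tilde{\delta}\colon X_I\Gamma_0'\to X_I\delta(\Gamma_0')$, $xu\mapsto x\delta(u)$. Since $\Psi$ is an $\bH$-module isomorphism, it preserves the filtration of $[X_I\Gamma_0']_A$ by $\leq_L$-closed subsets, and the triangularity forces each left cell of $W$ inside $X_I\Gamma_0'$ to be mapped bijectively under $\tilde{\delta}$ onto a left cell inside $X_I\delta(\Gamma_0')$; consequently $\Psi$ restricts to the desired $\bH$-module isomorphism $[\Gamma]_A\cong[\tilde{\delta}(\Gamma)]_A$.

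For (b), the strategy is to lift the chain witnessing $u\sim_{R,I}\delta(u)$ provided by condition~(3) to a chain witnessing $xu\sim_R x\delta(u)$ in $W$. At each elementary step $u\leftarrow_{R,I} u'$ in $W_I$ (meaning $h^{W_I}_{u',s,u}\neq 0$ for some $s\in I$), the claim is that $xu\leftarrow_R xu'$ in $W$, i.e.\ $h^W_{xu',s,xu}\neq 0$. This rests on the parabolic stability
\[h^W_{xu',s,xu}=h^{W_I}_{u',s,u}\qquad(s\in I,\ x\in X_I,\ u,u'\in W_I),\]
a right-multiplication analog of the left-action compatibility encoded by Example~\ref{cellind1}, read off from the relative Kazhdan--Lusztig polynomial machinery of \cite{myind}. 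Iterating along the chain from $u$ to $\delta(u)$ then yields $xu\sim_R x\delta(u)$.

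I expect the main obstacle to lie in part~(b): the parabolic stability statement for right multiplication by $C_s'$ with $s\in I$. Part~(a) uses Example~\ref{cellind1} essentially as a black box, but~(b) requires its right-multiplication counterpart---essentially that the expansion of $C_{xu'}'C_s'$ in the preferred basis agrees, on the ``parabolic block'' corresponding to $x$, with the expansion of $C_{u'}'C_s'$ in $\bH_I$. Verifying this on the level of structure constants is where the technical content sits; once it is in hand, both parts fall out as described.
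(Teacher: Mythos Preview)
Your overall strategy for part~(a) matches the paper's: reduce to $X_I\Gamma'$ via Theorem~\ref{cellind}, build the induced isomorphism, and then transport it back through the two instances of Example~\ref{cellind1}. But your unitriangularity step has a real gap. From triangularity of the $p^*$ alone you get
\[
\Psi(e_{yv}) = e_{y\delta(v)} + \sum_{xu < yv} p^*_{xu,yv}\, e_{x\delta(u)} + (\text{further terms}),
\]
where ``$<$'' is Bruhat order. There is no reason a priori that $xu<yv$ forces $x\delta(u)$ to be lower than $y\delta(v)$ in Bruhat order, let alone in the $\leq_L$-filtration on the target side; $\delta$ need not respect either order. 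And an abstract $\bH$-module isomorphism does not automatically carry the $\leq_L$-filtration on one side to that on the other: these are particular submodules, not all submodules, so the argument ``$\Psi$ preserves the filtration by $\leq_L$-closed subsets'' is unjustified as stated.

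The paper closes this gap by invoking a stronger fact: by \cite[Lemma~3.8]{myrel}, the relative polynomials depend only on the $\bH_I$-module $[\Gamma']_A$ (with its standard basis), so the isomorphism in condition~(2) forces
\[
p^*_{xu,yv} = p^*_{x\delta(u),y\delta(v)} \qquad (x,y\in X_I,\ u,v\in\Gamma').
\]
Hence $\Psi(e_{yv}) = e_{y\delta(v)}$ \emph{exactly}, with no tail at all. Then \cite[Prop.~3.9]{myrel} gives that $\tilde{\delta}$ carries the left-cell partition of $X_I\Gamma'$ onto that of $X_I\delta(\Gamma')$, together with the module isomorphism on each piece. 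So the missing ingredient is precisely Lemma~3.8 of \cite{myrel}; once you have it, no filtration argument is needed.

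For part~(b), the paper does not argue directly: it simply observes that the statement is a restatement of \cite[Prop.~9.11(b)]{Lusztig03}. Your proposed direct route---lifting an $\sim_{R,I}$-chain via a parabolic stability identity for the structure constants under right multiplication by $C_s'$ with $s\in I$---is essentially how that proposition is proved, and the identity you are after does follow from the relative Kazhdan--Lusztig machinery (cf.\ \cite[9.10--9.11]{Lusztig03}). So your instinct that this is where the technical content sits is correct, but in the context of the paper one may simply cite the existing result.
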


\begin{proof} (a) By Theorem~\ref{cellind}, there is a left cell 
$\Gamma'$ of $W_I$ such that $\Gamma\subseteq X_I\Gamma'$. By condition (1) 
in Definition~\ref{mydef1}, the set $\Gamma_1':=\delta(\Gamma')$ also is a 
left cell of $W_I$ and, by condition (2), the map $\delta$ induces an
$\bH_I$-module isomorphism $[\Gamma']_A\cong [\Gamma_1']_A$. By 
Example~\ref{cellind1}, the subsets $X_I\Gamma'$ and $X_I\Gamma_1'$
of $W$ are closed with respect to $\leq_L$ and, hence, we have 
corresponding $\bH$-modules $[X_I\Gamma']_A$ and $[X_I\Gamma_1']_A$.
These two $\bH$-modules are isomorphic to the induced modules
$\Ind_I^S([\Gamma'])$ and $\Ind_I^S([\Gamma_1'])$, respectively,
where explicit isomorphisms are given by the formula in 
Example~\ref{cellind1}. Now, by \cite[Lemma~3.8]{myrel}, we have
\[ p_{xu,yv}^*=p_{xu_1,yv_1}^* \qquad \mbox{for all $x,y\in X_I$ and
$u,v\in \Gamma'$},\]
where we set $u_1=\delta(u)$ and $v_1=\delta(v)$ for $u,v\in\Gamma'$. 
By \cite[Prop.~3.9]{myrel}, this implies that $\tilde{\delta}$ maps the 
partition of $X_I\Gamma'$ into left cells of $W$ onto the analogous 
partition of $X_I\Gamma_1'$. In particular, since $\Gamma\subseteq 
X_I\Gamma'$, the set $\tilde{\delta}(\Gamma) \subseteq X_I\Gamma_1'$ 
is a left cell of $W$; furthermore, \cite[Prop.~3.9]{myrel} also shows
that $\tilde{\delta}$ induces an $\bH$-module isomorphism
$[\Gamma]_A\cong [\tilde{\delta}(\Gamma)]_A$. 

(b) Since condition (3) in Definition~\ref{mydef1} is assumed to hold, this
is just a restatement of \cite[Prop.~9.11(b)]{Lusztig03}.
\end{proof}

As a first consequence, we can now show that \cite[Cor.~4.3]{KL} (concerning
the Kazhdan--Lusztig star operations) holds for general weight functions. 
(Partial results in this direction are obtained in 
\cite[Cor.~3.5(4)]{shi1}.) Note that some work has to be done to obtain
this generalisation since, in the setting of \cite[\S 4]{KL}, the 
polynomials $M_{y,w}^s$ are constant, which is no longer true in the general 
case and so some new arguments are required.

\begin{cor} \label{star1} Let $s,t\in S$ be such that $st$ has order $3$. 
Then, for any $w\in\cD_R(s,t)$, there is a unique $w^*\in \cD_R(s,t)$ such 
that $\cT_{s,t}(w)=\{w^*,w^*\}$ (as in \cite[\S 4]{KL} and 
Definition~\ref{defvog}). Let $\Gamma \subseteq \cD_R(s,t)$ be a left cell 
(with respect to the given weights $\{p_s\mid s\in S\}$). Then $\Gamma^*:=
\{w^*\mid w\in \Gamma\}$ also is a left cell. Furthermore, the map 
$w\mapsto w^*$ induces an $\bH$-module isomorphism $[\Gamma]_A \rightarrow 
[\Gamma^*]_A$ and we have $w\sim_{R} w^*$ for all $w\in \Gamma$.
\end{cor}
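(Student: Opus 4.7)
The plan is to recognise the corollary as a direct application of Proposition~\ref{myprop} to the strongly admissible pair $(I,\delta)$ already constructed in Example~\ref{expkalu1}. Set $I=\{s,t\}$, so that $W_I\cong\fS_3$, and let $\delta\colon W_I\to W_I$ be the bijection displayed there. As verified in Example~\ref{expkalu1} via the explicit matrix representations of $[\Gamma_s']_A$ and $[\Gamma_t']_A$, the pair $(I,\delta)$ satisfies conditions (1), (2), (3) of Definition~\ref{mydef1}, i.e.\ it is strongly admissible.

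First I would confirm the uniqueness of $w^*$ and the equality $w^*=\tilde{\delta}(w)$. Given $w\in\cD_R(s,t)$, write $w=xu$ with $x\in X_I$ and $u\in W_I$; since $\ell(xv)=\ell(x)+\ell(v)$ for all $v\in W_I$, one has $\cR(w)\cap I=\cR_{W_I}(u)$, and so the hypothesis $w\in\cD_R(s,t)$ forces $u\in\{s,t,st,ts\}$. A direct check in the six-element group $W_I$ shows that for each of these four values of $u$, exactly one of $us,ut$ again lies in $\{s,t,st,ts\}$, and the resulting element coincides with $\delta(u)$. Hence
\[
\{ws,wt\}\cap\cD_R(s,t)=\{x\delta(u)\}=\{\tilde{\delta}(w)\},
\]
which proves both the uniqueness of $w^*$ and the identity $w^*=\tilde{\delta}(w)$ (this is precisely the remark at the end of Example~\ref{expkalu1}).

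With this identification in hand, everything else is immediate. For any left cell $\Gamma\subseteq\cD_R(s,t)$ of $W$, Proposition~\ref{myprop}(a) yields that $\tilde{\delta}(\Gamma)$ is again a left cell of $W$ and that $\tilde{\delta}$ induces an $\bH$-module isomorphism $[\Gamma]_A\cong[\tilde{\delta}(\Gamma)]_A$. Since $\tilde{\delta}(\Gamma)=\Gamma^*$ by the previous paragraph, this gives the desired conclusions about $\Gamma^*$ and the module isomorphism. Finally, strong admissibility allows us to invoke Proposition~\ref{myprop}(b), giving $w\sim_R\tilde{\delta}(w)=w^*$ for all $w\in\Gamma$.

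There is no serious obstacle here: the whole machinery has been set up precisely so that this case falls out as a corollary, and the only concrete work is the finite combinatorial verification in $W_I$ identifying the star operation on $\cD_R(s,t)$ with $\tilde{\delta}$. The non-trivial analytic input—namely that the relative Kazhdan--Lusztig polynomials behave well under $\delta$ so that conditions~(1)--(3) descend to $W$—is entirely absorbed into Proposition~\ref{myprop} and the results of \cite{myrel} on which it rests; crucially, this bypasses the explicit computation with the $M$-polynomials that caused difficulty in the unequal-parameter generalisation of \cite[\S 4]{KL}.
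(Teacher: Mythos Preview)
Your proposal is correct and follows exactly the paper's own proof: set $I=\{s,t\}$, take the strongly admissible pair $(I,\delta)$ from Example~\ref{expkalu1}, identify $w^*=\tilde{\delta}(w)$, and invoke Proposition~\ref{myprop}(a),(b). The only difference is that you spell out the combinatorial verification of $w^*=\tilde{\delta}(w)$ in more detail than the paper, which simply refers back to the closing remark of Example~\ref{expkalu1}.
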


\begin{proof} Let $I=\{s,t\}$ and define $\delta\colon W_I\rightarrow
W_I$ as in Example~\ref{expkalu1}. We already noted that then 
$\tilde{\delta}(w) =w^*$ for all $w\in \cD_R(s,t)$. Hence, the assertions 
follow from Proposition~\ref{myprop}. 
\end{proof}

In analogy to Definition~\ref{defvog}, we can now introduce an 
invariant of left cells as follows.
 
\begin{defn} \label{mydef} Let $\Delta$ be a collection of admissible 
pairs $(I,\delta)$ as in Definition~\ref{mydef1}. For each $I\subseteq S$ 
which occurs as a first component of a pair in $\Delta$, we assume that 
we are given a relation $\Lambda_I\subseteq W_I\times W_I$ which contains
the relation defined by~$\sim_{L,I}$. (For example, $\Lambda_I=\{(u,u')
\in W_I\times W_I\mid \cR(u)=\cR(u')\}$; see Proposition~\ref{klright}.)

Now let $n\geq 0$ and $y,w\in W$. Then we define a relation 
$y \leftrightharpoons_n w$ inductively as follows. 
\begin{itemize}
\item[(i)] For $n=0$, we have $y\leftrightharpoons_0 w$ if $(\prI_I(y),
\prI_I(w))\in\Lambda_I$ for all $(I,\delta)\in \Delta$. 
\item[(ii)] Now let $n>0$ and assume that $\leftrightharpoons_{n-1}$ has 
been already defined. Then $y\leftrightharpoons_n w$ if 
$y\leftrightharpoons_{n-1} w$ and $\tilde{\delta}(y) 
\leftrightharpoons_{n-1} \tilde{\delta}(w)$ for all $(I,\delta)\in 
\Delta$.
\end{itemize}
If $y\leftrightharpoons_n w$ for all $n\geq 0$, then $y,w$ are said to have 
the same {\em generalized $\tilde{\tau}^\Delta$-invariant}. 
\end{defn}

\begin{cor} \label{mythm} In the setting of Definition~\ref{mydef},
all elements in a left cell $\Gamma$ of $W$ have the same generalised 
$\tilde{\tau}^\Delta$-invariant.
\end{cor}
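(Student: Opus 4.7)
The plan is to prove the corollary by induction on $n$, showing that whenever $y,w\in W$ lie in the same left cell $\Gamma$ of $W$, we have $y\leftrightharpoons_n w$. Since the definition of $\leftrightharpoons_n$ is built up from two ingredients, namely (i) the descent-type data through $\Lambda_I$ and (ii) compatibility with the operators $\tilde{\delta}$, the two bits of machinery already available in the excerpt are exactly what is needed: Theorem~\ref{cellind} handles (i), and Proposition~\ref{myprop}(a) handles (ii).

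For the base case $n=0$, I would fix an arbitrary pair $(I,\delta)\in\Delta$ and observe that Theorem~\ref{cellind} gives $\prI_I(y)\sim_{L,I}\prI_I(w)$. Since by hypothesis the relation $\Lambda_I$ contains the equivalence relation $\sim_{L,I}$, this immediately yields $(\prI_I(y),\prI_I(w))\in\Lambda_I$. As this holds for every $(I,\delta)\in\Delta$, we have $y\leftrightharpoons_0 w$.

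For the induction step, assume $n>0$ and that the statement holds for $n-1$. Given $y,w\in\Gamma$, the induction hypothesis already provides $y\leftrightharpoons_{n-1} w$. It then remains to check, for each $(I,\delta)\in\Delta$, that $\tilde{\delta}(y)\leftrightharpoons_{n-1}\tilde{\delta}(w)$. Here is the key point: by Proposition~\ref{myprop}(a), $\tilde{\delta}(\Gamma)$ is again a left cell of $W$, and both $\tilde{\delta}(y)$ and $\tilde{\delta}(w)$ lie in it, so $\tilde{\delta}(y)\sim_L\tilde{\delta}(w)$. Applying the induction hypothesis to this new pair then gives $\tilde{\delta}(y)\leftrightharpoons_{n-1}\tilde{\delta}(w)$, as required.

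There is no genuine obstacle once Proposition~\ref{myprop}(a) is in hand; the whole statement is essentially a formal consequence of the fact that the $\tilde{\delta}$ associated with an admissible pair permutes the left cells of $W$. In other words, the substantive content of Corollary~\ref{mythm} has already been established in Proposition~\ref{myprop}, and the role of the induction is purely bookkeeping to propagate this stability through the iterated definition of $\leftrightharpoons_n$.
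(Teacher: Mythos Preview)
Your proof is correct and follows essentially the same approach as the paper's own proof: induction on $n$, with the base case handled by Theorem~\ref{cellind} (together with the assumption that $\Lambda_I$ contains $\sim_{L,I}$) and the induction step by Proposition~\ref{myprop}(a). Your write-up actually spells out the base case more explicitly than the paper does, but the argument is identical.
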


\begin{proof} We prove by induction on $n$ that, if $y,w\in W$ are such 
that $y\sim_L w$, then $y\leftrightharpoons_n w$. For $n=0$, this holds
by Theorem~\ref{cellind}. Now assume that $n>0$. By induction, we 
already know that $y\leftrightharpoons_{n-1} w$. Then it remains to 
consider a pair $(I,\delta)\in \Delta$. By Proposition~\ref{myprop}(a), 
we have $\tilde{\delta}(y)\sim_L \tilde{\delta}(w)$ and, by induction, 
we have $\tilde{\delta}(y)\leftrightharpoons_{n-1}\tilde{\delta}(w)$.
\end{proof}

The situation considered by Lusztig \cite[\S 10]{Lu1} (see also 
Vogan \cite[\S 4]{voga1} and McGovern \cite[\S 4]{mcg} for the case 
$m=4$) fits into this setting as follows.

\begin{exmp} \label{explu1} Let $\fI_{\geq 3}$ be the set of all subsets 
$I\subseteq S$ such that $I=\{s,t\}$, where $s\neq t$, $p_s=p_t$ 
and $st$ has finite order $m\geq 3$. For any $I\in \fI_{\geq 3}$, the 
group $W_I$ is a dihedral group of order $2m$. For $k\geq 0$ let 
$1_k= sts\ldots$ ($k$ factors) and $2_k= tst\ldots$ ($k$ factors). Then
the left cells of $W_I$ are described as follows (see Example~\ref{celli2m}):
\begin{align*}
\{1_0\},\quad &\{2_1,1_2,2_3,\ldots,1_{m-1}\},\quad 
\{1_1,2_2,1_3,\ldots,2_{m-1}\},\quad \{2_m\}\qquad \mbox{($m$ odd)},\\
\{1_0\},\quad &\{2_1,1_2,2_3,\ldots,2_{m-1}\},\quad 
\{1_1,2_2,1_3,\ldots,1_{m-1}\},\quad \{2_m\}\qquad \mbox{($m$ even)}.
\end{align*}
We define an involution $\delta\colon W_I\rightarrow W_I$ as follows:
\[ \delta(1_0)=1_0, \quad \delta(2_m)=2_m,\quad \delta(1_k)=1_{m-k},
\quad \delta(2_k)=2_{m-k} \quad \mbox{for $1\leq k\leq m-1$}.\]
If $m$ is odd, then $\delta$ perserves each of the left cells $\{1_0\}$,
$\{2_m\}$ and interchanges the two left cells with $m-1$ elements (reversing
the order in which the elements are listed). If $m$ is even, then $\delta$ 
perserves each of the four left cells of $W_I$, where in each of the two 
left cells with $m-1$ elements, the order of the elements is reversed. 
Thus, conditions (1) and (3) in Definition~\ref{mydef1} hold for all
pairs in the collection
\[\Delta_{\geq 3}:=\{(I,\delta)\mid I\in\fI_{\geq 3}\}.\]
Using the formulae in \cite[7.2, 7.3]{Lusztig03}, it is straightforward to 
check that condition (2) also holds. For $m=3$, this has been done explicitly
in Example~\ref{expkalu1}. Let us also show explicitly how this works 
for $m=4, 5$. 

First let $m=4$. Consider the two left cells $\Gamma_s'=\{s,ts,
sts\}$ and $\Gamma_t'=\{t,st,tst\}$. The matrix representation afforded by 
$[\Gamma_s']_A$ with respect to the basis $\{e_{s}, e_{ts}, e_{sts}\}$ is 
given by:
\[ C_{s}' \mapsto \left[\begin{array}{ccc} v^p+v^{-p} & 1 & 0 \\ 0 & 0 & 0\\
0 & 1 & v^p+v^{-p} \end{array}\right],\qquad C_{t}' \mapsto 
\left[\begin{array}{ccc} 0 & 0 & 0 \\ 1 & v^p+v^{-p} & 1 \\ 0 & 0 & 0
\end{array}\right] \]
(where $p:=p_s=p_t$). The matrix representation afforded by $[\Gamma_t']_A$ 
with respect to the basis $\{e_{t}, e_{st}, e_{tst}\}$ is given by:
\[ C_{s}' \mapsto \left[\begin{array}{ccc} 0 & 0 & 0 \\ 1 & v^p+v^{-p} & 1\\
0 & 0 & 0 \end{array}\right],\qquad C_{t}' \mapsto \left[\begin{array}{ccc} 
v^p+v^{-p} & 1 & 0 \\ 0 & 0 & 0 \\ 0 & 1 & v^p+v^{-p} \end{array}\right]. \]
Thus, there is no bijection $\Gamma_s'\rightarrow \Gamma_t'$ which 
induces an $\bH_I$-module isomorphism $[\Gamma_s']_A\cong [\Gamma_t']_A$.
However, we have $\delta(\Gamma_s')=\Gamma_s'$ where $s\mapsto sts$, 
$ts\mapsto ts$, $sts\mapsto s$, and this map yields a non-trivial 
$\bH_I$-module automorphism of $[\Gamma_s']_A$; a similar
remark applies to $[\Gamma_t']$.

Now let $m=5$. We have the two left cells $\Gamma_s'=\{s,ts,sts,tsts\}$ 
and $\Gamma_t'=\{t,st,tst,stst\}$. The matrix representation afforded by 
$[\Gamma_s']_A$ with respect to the basis $\{e_{s}, e_{ts}, e_{sts},
e_{tsts}\}$ is  given by:
\[ C_{s}' \mapsto \left[\begin{array}{cccc} v^p+v^{-p} & 1 & 0 & 0\\ 
0 & 0 & 0 & 0 \\ 0 & 1 & v^p+v^{-p} & 1 \\ 0 & 0 & 0 & 0 \end{array}\right],
\qquad C_{t}' \mapsto \left[\begin{array}{cccc} 0 & 0 & 0 &0 \\ 
1 & v^p+v^{-p} & 1 &0 \\ 0 & 0 & 0 & 0 \\ 0 & 0 & 1 & v^p+v^{-p} 
\end{array}\right],\]
and we obtain exactly the same matrices when we consider the matrix
representation afforded by $[\Gamma_t']_A$ with respect to the basis
$\{e_{stst},e_{tst},e_{st},e_t\}$. 

We notice that, if $w\in W$ is any element such that $w\in \cD_R(s,t)$
(see Definition~\ref{rem2}), then $\tilde{\delta}(w)=\tilde{w}$,
with $\tilde{w}$ as defined in Remark~\ref{rem1}. Thus, 
Proposition~\ref{myprop} provides a new proof of the part of 
\cite[Prop.~10.7]{KL} concerning the tilde construction; this new
proof does not rely on the positivity properties used in [{\em loc.\ cit.}].
\end{exmp}

Finally, we consider a genuine case of unequal parameters.

\begin{exmp} \label{myexp} Let $\fI_{\pi}$ be the set of all subsets 
$I\subseteq S$ such that $I=\{s,t\}$, where $s\neq t$, $p_s<p_t$ 
and $st$ has finite even order $m\geq 4$. For any $I\in \fI_{\pi}$, 
the group $W_I$ is a dihedral group of order $2m$. For $k\geq 0$ let 
$1_k= sts\ldots$ ($k$ factors) and $2_k= tst\ldots$ ($k$ factors). Then 
the left cells of $W_I$ are described as follows (see Example~\ref{celli2m}):
\[\{1_0\},\quad \{2_1,1_2,2_3,\ldots,1_{m-2}\},\quad \{2_{m-1}\},\quad
\{1_1\},\quad \{2_2,1_3,2_4,\ldots,1_{m-1}\},\quad \{2_m\}.\]
We define an involution $\delta\colon W_I \rightarrow W_I$ as follows:
$\delta(w) =w$ for $w\in \{1_0,1_1,2_{m-1},2_m\}$ and 
\[\delta\colon \quad\begin{array}{ccccc} 2_1 & 1_2 & 2_3 & \ldots &1_{m-2}\\
\updownarrow & \updownarrow & \updownarrow & \ldots & \updownarrow \\
2_2 & 1_3 & 2_4 & \ldots & 1_{m-1}\end{array}\]
Thus, $\delta$ perserves each of the left cells $\{1_0\}$, $\{2_{m-1}\}$,
$\{1_1\}$, $\{2_m\}$ and interchanges the two left cells with $m-2$ elements
(preserving the order in which the elements are listed). So, conditions (1) 
and (3) in Definition~\ref{mydef1} hold for all pairs in the collection
\[\Delta_\pi:=\{(I,\delta) \mid I\in\fI_\pi\}.\]
Using the knowledge of the polynomials $M_{y,w}^s$ (see 
\cite[Exc.~11.4]{gepf} or \cite[7.5, 7.6]{Lusztig03}), it is 
straightforward to check that condition (2) also holds. Let us show 
explicitly how this works for $m=4,6$. 

First let $m=4$. We have to consider the two left cells $\Gamma_1'=
\{t,st\}$ and $\Gamma_2'=\{ts,sts\}$. The matrix representation afforded 
by $[\Gamma_1']_A$ with respect to the basis $\{e_{t}, e_{st}\}$ is given by:
\[ C_{s}' \mapsto \left[\begin{array}{cc} 0 & 0 \\ 1 & v^{p_s}+v^{-p_s}
\end{array}\right],\qquad C_{t}' \mapsto \left[\begin{array}{cc} v^{p_t}+
v^{-p_t} & v^{p_t-p_s}+v^{p_s-p_t} \\ 0 & 0 \end{array}\right],\]
and we obtain exactly the same matrices when we consider the matrix
representation afforded by $[\Gamma_2']_A$ with respect to the basis
$\{e_{ts},e_{sts}\}$. 

Next consider the case $m=6$. We have the two left cells $\Gamma_1'=
\{t,st,tst,stst\}$ and $\Gamma_2'=\{ts,sts,tsts,ststs\}$. The two matrices 
describing the action of $C_s'$ and $C_t'$ on $[\Gamma_1']_A$ with respect 
to the basis $\{e_t,e_{st},e_{tst},e_{stst}\}$ are given by
\[\left[\begin{array}{c@{\hspace{4pt}}c@{\hspace{4pt}}
c@{\hspace{4pt}}c} 
0 & 0 & 0 & 0  \\
1 & v^{p_s}{+}v^{-p_s} & 0 & 0 \\ 
0 & 0 & 0 & 0 \\
0 & 0 & 1 & v^{p_s}{+}v^{-p_s} \end{array}\right],\qquad 
\left[\begin{array}{c@{\hspace{5pt}}c@{\hspace{5pt}}c@{\hspace{5pt}}c} 
v^{p_t}{+}v^{-p_t} & v^{p_t-p_s}{+}v^{p_s-p_t} & 0 & 1 \\ 
0 & 0 & 0 & 0 \\ 
0 & 1 & v^{p_t}{+}v^{-p_t} & v^{p_t-p_s}{+}v^{p_s-p_t}\\ 
0 & 0 & 0 & 0 \end{array}\right],\]
respectively, and we obtain exactly the same matrices when we consider the 
matrix representation afforded by $[\Gamma_2']_A$ with respect to the basis
$\{e_{ts},e_{sts},e_{tsts},e_{ststs}\}$. 
\end{exmp}

For any subset $I=\{s,t\}\subseteq S$ where $s\neq t$ and $st$ has finite 
order $m\geq 3$, we set $\Lambda_I=\{(u,u')\in W_I\times W_I\mid \cR^\pi
(u)=\cR^\pi(u')\}$; see Definition~\ref{rem2}. With this convention, 
we would now like to state the following conjecture. 

\begin{conj} \label{myconj} Two elements $y,w\in W$ belong to the same
left cell (with respect to the given weights) if and only if $y,w$ belong
to the same two-sided cell and $y,w$ have the same generalised 
$\tilde{\tau}^{\Delta}$-invariant where $\Delta=\Delta_{\geq 3}
\cup \Delta_{\pi}$ (see Examples~\ref{explu1} and \ref{myexp}).
\end{conj}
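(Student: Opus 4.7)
The forward direction is immediate: if $y\sim_L w$, then certainly $y\sim_{LR} w$, and by Corollary~\ref{mythm} applied to the collection $\Delta=\Delta_{\geq 3}\cup\Delta_\pi$ the two elements share the same generalised $\tilde\tau^\Delta$-invariant. All of the substance lies in the converse.

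For the converse, the plan is to proceed by induction on $|S|$, reducing as much as possible to parabolic subgroups via Theorem~\ref{cellind}. Fix a two-sided cell $\cF$ and suppose $y,w\in \cF$ share the same $\tilde\tau^\Delta$-invariant. The strategy is to exhibit a chain $\tilde\delta_1,\ldots,\tilde\delta_k$ of operations drawn from $\Delta$ such that $(\tilde\delta_k\circ\cdots\circ\tilde\delta_1)(y)=w$; since each $\tilde\delta_i$ preserves left cells by Proposition~\ref{myprop}(a), this forces $y\sim_L w$. The base of the induction is the dihedral case: Example~\ref{celli2m} together with the observation in Definition~\ref{rem2} shows that $\cR^\pi$ is a complete invariant for the left cells of any dihedral group, so the $\Lambda_I$-data of Corollary~\ref{mycor} already suffices when $|S|=2$.

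To make the induction step work, one would use Theorem~\ref{cellind} to compare $\prI_I(y)$ and $\prI_I(w)$ in $W_I$ for all proper $I\subsetneq S$, and then exploit the operators $\tilde\delta$ to move between $X_I$-cosets without leaving the ambient left cell. The strong admissibility of every pair in $\Delta$ (condition~(3) of Definition~\ref{mydef1}) guarantees, via Proposition~\ref{myprop}(b), that each $\tilde\delta$ acts inside a single right cell, which is precisely what is needed to stay inside the fixed two-sided cell $\cF$. The heart of the problem is then to prove that the group generated by the $\tilde\delta$, together with the iterated descent information encoded by the relations $\leftrightharpoons_n$, acts transitively on the subset of $\cF$ having a prescribed $\tilde\tau^\Delta$-invariant.

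The main obstacle is the absence of a uniform combinatorial model for left cells outside types $A$, $B$ (Garfinkle) and a few affine situations; the two-sided cell hypothesis is doing real work here, since without it the analogous statement already fails in type $D_n$ for $n\geq 6$, so any argument must invoke $\sim_{LR}$ non-trivially. A realistic approach is first to verify the conjecture computationally for small-rank finite Coxeter groups with various weight choices---most pressingly $F_4$ and $B_n$ for small $n$, where the unequal-parameter theory is richest---and then to attempt a uniform argument via Lusztig's asymptotic ring $J$: decompose $J_\cF$ into its irreducible modules, identify the left cells inside $\cF$ with these constituents, and try to realise the operations $\tilde\delta$ explicitly as $J_\cF$-module endomorphisms whose orbit structure matches the $\tilde\tau^\Delta$-equivalence classes. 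A complete proof in the stated generality is likely to require further structural advances in the theory of cells with unequal parameters.
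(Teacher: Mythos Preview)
The statement you are attempting to prove is labelled \emph{Conjecture} in the paper, and indeed no proof is given there. The only direction the paper establishes is the forward one: if $y\sim_L w$ then $y\sim_{LR}w$ trivially, and Corollary~\ref{mythm} (applied to $\Delta=\Delta_{\geq 3}\cup\Delta_\pi$) gives the equality of $\tilde\tau^\Delta$-invariants. For the converse the paper offers no argument at all, only evidence: it is known for finite $W$ in the equal parameter case outside types $B_n,D_n$, and it has been verified computationally (via {\sf PyCox}) for $F_4$ and $B_n$, $n\leq 7$, with all choices of weights. So there is nothing to compare your converse argument against; you are attempting to prove an open problem.

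Beyond that, your proposed strategy for the converse contains a genuine error. You write that if one can find a chain $\tilde\delta_1,\ldots,\tilde\delta_k$ with $(\tilde\delta_k\circ\cdots\circ\tilde\delta_1)(y)=w$, then ``since each $\tilde\delta_i$ preserves left cells by Proposition~\ref{myprop}(a), this forces $y\sim_L w$''. But Proposition~\ref{myprop}(a) says only that $\tilde\delta$ carries a left cell $\Gamma$ bijectively onto another left cell $\tilde\delta(\Gamma)$; it does \emph{not} say $\tilde\delta(\Gamma)=\Gamma$. Hence a chain sending $y$ to $w$ merely exhibits a bijection between the left cell of $y$ and the left cell of $w$, which is very far from showing they coincide. (Concretely, in Example~\ref{expkalu1} the map $\tilde\delta$ interchanges the two non-trivial left cells $\Gamma_s'$ and $\Gamma_t'$ of $W_I$, so $\tilde\delta(s)=st$ while $s\not\sim_L st$.) The same misreading undercuts your later remark that the group generated by the $\tilde\delta$ should act transitively on a subset of a two-sided cell with fixed $\tilde\tau^\Delta$-invariant: that subset is a single left cell if the conjecture holds, and the $\tilde\delta$ in general move you \emph{between} left cells rather than within one. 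Your induction on $|S|$ also does not get off the ground: knowing $\prI_I(y)\sim_{L,I}\prI_I(w)$ for every proper $I$ does not by itself pin down the left cell of $y$ in $W$, since distinct left cells inside $X_I\Gamma'$ all have the same $\prI_I$-image. In short, the forward direction is fine and matches the paper; the converse remains open, and the mechanism you propose would need to be replaced entirely.
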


If $W$ is finite and we are in the equal parameter case, then 
Conjecture~\ref{myconj} is known to hold except possibly in type $B_n,D_n$;
see the remarks at the end of \cite[\S 6]{geha}. We have checked that
the conjecture also holds for $F_4$, $B_n$ ($n\leq 7$) and all 
possible weights, using {\sf PyCox} \cite{pycox}. 

By considering collections $\Delta$ with subsets $I\subseteq S$ of 
size bigger than~$2$, one can obtain further refinements of the above 
invariants. In particular, it is likely that the results of
Bonnaf\'e and Iancu \cite{bo2}, \cite{BI} can be interpreted in terms
of generalised $\tau^\Delta$-invariants for suitable collections $\Delta$.
This will be discussed elsewhere.


\end{document}